\theoremstyle{plain}
\newtheorem*{theorem*}{Theorem}
\newtheorem{corollary}{Corollary}
\newtheorem{lemma}{Lemma}
\newtheorem*{proposition*}{Proposition}
\newtheorem*{definition*}{Definition}
\theoremstyle{remark}
\newtheorem*{claim*}{Claim}
\newtheorem*{remark*}{Remark}
\newtheorem*{example*}{Example}
\newtheorem*{notation*}{Notation}
\def\N{{\mathbb N}}
\def\R{{\mathbb R}}
\def\Q{{\mathbb Q}}
\newcommand{\E}{\mathcal{E}}
\begin{document}
\title[Gradient Flows for semiconvex Functions on Metric Measure Spaces]{Gradient Flows for semiconvex Functions on Metric Measure Spaces -- Existence, Uniqueness and Lipschitz Continuity}
\author{Karl-Theodor Sturm}
 \address{
 University of Bonn\\
 Institute for Applied Mathematics\\
 Endenicher Allee 60\\
 53115 Bonn\\
 Germany}
  \email{sturm@uni-bonn.de}

\thanks{
The author gratefully acknowledges  support by the European Union through the
ERC Advanced Grant ``Metric measure spaces and Ricci curvature - analytic, geometric, and probabilistic challenges" (``RicciBounds'')
as well as support by the German Research Foundation through the Hausdorff Center for Mathematics and the Collaborative Research Center 1060 ``The Mathematics of Emergent Effects''.
}

\maketitle

\begin{abstract} 
Given any continuous, lower bounded and $\kappa$-convex function $V$ on a metric measure space $(X,d,m)$ which is infinitesimally Hilbertian and satisfies some synthetic lower bound for the Ricci curvature in the sense of Lott-Sturm-Villani, we prove existence and uniqueness for the (downward) gradient flow for $V$. Moreover, we prove  Lipschitz continuity of the flow w.r.t. the starting point 
\begin{equation*}
d(x_t,x'_t)\le e^{-\kappa\, t} d(x_0,x_0').
\end{equation*}
\end{abstract}


Throughout this paper, let
 $(X,d)$  be a locally compact geodesic space and let $m$ be a locally finite Borel measure with full topological support. We always assume that  the
metric measure space  $(X,d,m)$  satisfies the RCD$(K,\infty)$-condition for some  finite number $K\in\R$.
Recall that this means that $\mathrm{Ent}(.|m)$, the Boltzmann entropy w.r.t. $m$, is weakly 
$K$-convex  
on the geodesic spaces $(\mathcal{P}_2(X),W_2)$  and that $\E$, the Cheeger energy on $(X,d,m)$, is a quadratic functional on $L^2(X,m)$, \cite{St1}, \cite{LV1}, \cite{AGSb}, \cite{EKS}.
Among many others, it implies that
$\int_X e^{- C'\, d^2(x,x')}\,m(dx)<\infty$ for some $C'\in\R, x'\in X$.
Note that the latter in turn implies 
$\mathrm{Ent}(\mu|m)\ge -C''-C'\, W_2^2(\mu,\delta_{x'})$ for each $\mu\in \mathcal{P}_2(X)$. 

Here and henceforth $W_2$ will denote the $2$-Kantorovich-Wasserstein distance and $\mathcal{P}_2(X)$ the $2$-Wasserstein space consisting of those Borel probability measures $\mu$ on $X$ which satisfy $W_2(\mu,\delta_{x'})<\infty$ (for some, hence all, $x'\in X$). Recall that
$  \mathrm{Ent}(\mu|m)= \int \rho\log \rho \, dm$
provided $\mu=\rho m$ is absolutely continuous w.r.t.~$m$ and
$\int\rho(\log\rho)_+\,  dm<\infty$. Otherwise
$\mathrm{Ent}(\mu|m)=+\infty$.
Also recall that $\E(u)=\liminf\big\{\int_X |Du_n|^2\,dm: \ u_n\in \mathrm{Lip}(X), u_n\to u\ \mathrm{ in } \ L^2(X,m)\big\}$ with
$|Du|(x)=\limsup_{y\to x}\frac{|u(x)-u(y)|}{d(x,y)}$.

Moreover, let a function
$V: X\to (-\infty,+\infty]$ be given which is  continuous (w.r.t. the topology of the extended real line) and
bounded from below by $-C_0-C_1 d^2(.,x_0)$ for some $C_0,C_1\in\R, x_0\in X$.
Put $X_0=\{V<\infty\}$ and
$X'=\overline{X_0}$.

\begin{definition*}
\begin{itemize}
\item[(i)]
Given a number $\kappa\in\R$, we say that 
 $V$ is weakly $\kappa$-convex  if for each $x_0,x_1\in X$ there exists a geodesic  $\gamma:[0,1]\to X$ connecting them (i.e. $\gamma_0=x_0, \gamma_1=x_1$) such that
\begin{equation}\label{conv}
V(\gamma_t)\le (1-t) V(\gamma_0)+t V(\gamma_1)-\kappa t(1-t) |\dot\gamma|^2/2
\end{equation}
for each $t\in [0,1]$.
If \eqref{conv} holds for all geodesics $\gamma$ in $X$ and all $t\in[0,1]$ then $V$ is called strongly $\kappa$-convex.

\item[(ii)] Given $x_0\in X$,  an EVI$_\kappa$-gradient flow for $V$ starting in $x_0$ is  an absolutely continuous curve $(x_t)_{t\ge0}$ in $X$ emanating in $x_0$ such that for all $z\in X_0$ and a.e. $t>0$
\begin{equation}\label{evi}
\frac{d}{dt} \frac12 d^2(x_t,z) +\kappa \, d^2(x_t,z)\le V(z)-V(x_t).
\end{equation}
\end{itemize}
\end{definition*}
Note that here and in the sequel -- as common in metric geometry -- a geodesic is always minimizing and parametrized by arc length.

The main result of this paper is

\begin{theorem*} If $V$ is weakly $\kappa$-convex 
then for each $x_0\in X'$ there exists a unique EVI$_\kappa$-gradient flow for $V$ starting in ${x_0}$. 
\end{theorem*}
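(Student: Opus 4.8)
The plan is to dispose of uniqueness together with the contraction estimate first, and then to construct the flow by a minimizing-movement scheme; the passage from the discrete to the continuous inequality will be the main difficulty.

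\textbf{Uniqueness and contraction.} Let $(x_t)$ and $(x_t')$ be two EVI$_\kappa$-flows. The inequality \eqref{evi} is understood together with local absolute continuity of $t\mapsto d^2(x_t,z)$, so inserting $z=x_t'$ into \eqref{evi} for $(x_t)$ and $z=x_t$ into \eqref{evi} for $(x_t')$ and adding the two at a.e.\ $t$ gives
\begin{equation*}
\frac{d}{dt} d^2(x_t,x_t') + 2\kappa\, d^2(x_t,x_t') \le 0 .
\end{equation*}
Gronwall's lemma yields $d(x_t,x_t') \le e^{-\kappa t} d(x_0,x_0')$, whence uniqueness and the announced Lipschitz bound. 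This contraction also reduces existence to the case $x_0\in X_0$: for $x_0\in X'=\overline{X_0}$ choose $y^{(k)}\in X_0$ with $y^{(k)}\to x_0$; the associated flows are Cauchy, uniformly on compact time intervals, by contraction, and \eqref{evi} is stable under such limits, so the limit curve is the sought flow emanating from $x_0$.

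\textbf{Minimizing movements.} For $x\in X_0$ and small $\tau>0$ set
\begin{equation*}
J_\tau(x)\in\arg\min_{z\in X}\Big[\,V(z)+\tfrac{1}{2\tau}\,d^2(z,x)\,\Big].
\end{equation*}
Lower semicontinuity of $V$ and the lower bound $V\ge -C_0-C_1 d^2(\cdot,x_0)$ make this functional coercive and lower semicontinuous for $\tau<1/(2C_1)$, so local compactness of $(X,d)$ yields a minimizer; $\kappa$-convexity of $V$ renders the penalized functional strictly geodesically convex for small $\tau$, giving uniqueness of $J_\tau(x)$. Iterating $x_n^\tau=J_\tau(x_{n-1}^\tau)$ from $x_0^\tau=x_0$ and testing minimality against $z=x_{n-1}^\tau$ gives the monotonicity $V(x_n^\tau)\le V(x_{n-1}^\tau)$ and the discrete energy bound $\sum_n \tfrac{1}{2\tau}d^2(x_n^\tau,x_{n-1}^\tau)\le V(x_0)-\inf_B V$, where $B$ is a ball in which the whole discrete trajectory is trapped (the lower bound on $V$ localizes it). These bounds furnish a uniform H\"older-$\tfrac12$ estimate $d(\bar x^\tau_s,\bar x^\tau_t)\le C\,|s-t|^{1/2}$ for the interpolants, and Ascoli--Arzel\`a (using local compactness) extracts a limit curve $(x_t)$ along a subsequence $\tau\to 0$.

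\textbf{The discrete EVI and the main obstacle.} Testing minimality of $x_n^\tau$ against the competitor $\gamma_s$, where $\gamma$ is the geodesic from $x_n^\tau$ to an arbitrary $z$ along which \eqref{conv} holds, and combining \eqref{conv} with a first-order expansion of $s\mapsto\tfrac12 d^2(\gamma_s,x_{n-1}^\tau)$, one arrives at a discrete version of \eqref{evi},
\begin{equation*}
\frac{1}{2\tau}\Big[d^2(x_n^\tau,z)-d^2(x_{n-1}^\tau,z)\Big]+\kappa\, d^2(x_n^\tau,z)\le V(z)-V(x_n^\tau),
\end{equation*}
a nonnegative remainder $\tfrac{1}{2\tau}d^2(x_n^\tau,x_{n-1}^\tau)$ being discarded and the coefficient of $d^2(x_n^\tau,z)$ being the one dictated by \eqref{conv} and the squared-distance estimate. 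Summing over $n$, telescoping, passing to the limit $\tau\to 0$, and using the lower semicontinuity of $V$ then turns this into \eqref{evi} for the limit curve. The crux --- and the step I expect to be hardest --- is the upper bound on the first variation
\begin{equation*}
\frac{d}{ds}\Big|_{s=0^+}\tfrac12\, d^2(\gamma_s,w)\le \tfrac12\Big[d^2(z,w)-d^2(x_n^\tau,z)-d^2(x_n^\tau,w)\Big],
\end{equation*}
a first-order geodesic convexity of the squared distance with $w=x_{n-1}^\tau$. Such a bound \emph{fails} in a general geodesic space, and it is exactly here that RCD$(K,\infty)$ is indispensable: infinitesimal Hilbertianity supplies the Hilbertian cosine formula for the first variation of $d^2$ along geodesics, while the lower Ricci bound forces the geodesic structure to be sufficiently regular (essential non-branching, controlled spreading) for this variation to exist and to obey the stated inequality, the curvature corrections being of second order in $s$ and hence invisible after the limit $s\to 0$ taken within each step. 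Assembling this estimate with the a priori bounds completes the verification that $(x_t)$ is an EVI$_\kappa$-flow.
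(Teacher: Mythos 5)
There is a genuine gap, and it sits exactly at the step you yourself flag as the crux. The first-variation inequality
\begin{equation*}
\frac{d}{ds}\Big|_{s=0^+}\tfrac12\, d^2(\gamma_s,w)\le \tfrac12\Big[d^2(z,w)-d^2(x,z)-d^2(x,w)\Big]
\end{equation*}
is a \emph{nonpositive-curvature} (CAT(0)) property: it is the infinitesimal form of the $1$-convexity of $d^2(\cdot,w)$ along geodesics, which characterizes NPC spaces. It is not implied by RCD$(K,\infty)$. The round sphere is RCD$(1,\infty)$ and infinitesimally Hilbertian, yet there Toponogov comparison gives the \emph{reverse} inequality (strictly, in general position): lower curvature bounds make triangles fatter, so the derivative of $d^2(\gamma_s,w)$ at $s=0$ exceeds the Euclidean comparison value. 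Infinitesimal Hilbertianity only says that the Cheeger energy is quadratic -- a measure-theoretic statement about minimal relaxed gradients -- and yields no macroscopic cosine rule for the distance function. The same obstruction undermines your earlier claim that $V+\frac{1}{2\tau}d^2(\cdot,x)$ is geodesically strictly convex for small $\tau$: that would require $d^2(\cdot,x)$ to be semiconvex, which again fails in positively curved (hence in general RCD) spaces, so even uniqueness of $J_\tau(x)$ and the derivation of the discrete EVI collapse. This is not a fixable technicality: the paper's closing remark states explicitly that no direct proof on $X$ of this kind (avoiding the Wasserstein space) is known to the author.

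The paper's route is entirely different and circumvents the missing convexity of $d^2$. It lifts the problem to $\mathcal{P}_2(X')$: the weighted space $(X',d,e^{-nV}m')$ is shown to satisfy RCD$(K+n\kappa,\infty)$, so by the Ambrosio--Gigli--Savar\'e theory the entropy $\mathrm{Ent}(\cdot|e^{-nV}m')$ admits an EVI$_{K+n\kappa}$-flow; after rescaling this gives an EVI$_{\kappa_n}$-flow for $S_n(\mu)=\frac1n\mathrm{Ent}(\mu|m)+\int V\,d\mu$. Letting $n\to\infty$ (with compactness, lower semicontinuity, and an approximation argument to remove the finite-entropy constraint on observation points) produces an EVI$_\kappa$-flow for $S(\mu)=\int V\,d\mu$; a variance-decay estimate $\mathrm{Var}_2(\mu_t)\le e^{-2\kappa t}\mathrm{Var}_2(\mu_0)$ then shows the flow started at a Dirac mass remains a Dirac mass, and evaluating the EVI at Dirac observation points projects the flow down to the desired EVI$_\kappa$-flow for $V$ on $X'$. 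Your uniqueness-and-contraction paragraph is fine and agrees with what the paper invokes as well known; it is only the existence part that needs the Wasserstein detour.
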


It is well-known -- and easy to see -- that the existence of an EVI$_\kappa$-gradient flow for $V$ starting in $x_0$ implies its uniqueness and the `contraction' property (or `expansion' bound)
\begin{equation}\label{contract}
d(x_t,x'_t)\le e^{-\kappa\, t} d(x_0,x_0')
\end{equation}
for each pair of points $x_0,x_0'\in X'$ and associated flows  $(x_t)_{t\ge0}$ and $(x_t')_{t\ge0}$. Moreover, every EVI$_\kappa$-gradient flow is also a gradient flow in the sense of energy dissipation
\begin{equation}\label{dissi}
V(x_t)=V(x_s)-\frac12\int_s^t \left(|\dot x_r|^2+|\nabla^- V|^2(x_r)\right)dr
\end{equation}
for all $0\le s\le t$ where as usual
$|\nabla^- V|(x):=\limsup_{y\to x}\frac{[V(x)-V(y)]_+}{d(x,y)}$.
  In particular, $t\mapsto V(x_t)$ is non-increasing.

\begin{corollary}
For each $x_0\in X'$ there exists a unique curve $(x_t)_{t\ge0}$ such that 
$\mu_t=\delta_{x_t}$ is the EVI$_\kappa$-gradient flow for $S$ starting in $\mu_0=\delta_{x_0}$. 

The curve $(x_t)_{t\ge0}$ is the unique EVI$_\kappa$-gradient flow for $V$ starting in ${x_0}$.
\end{corollary}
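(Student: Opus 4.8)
The plan is to use the variance contraction of the preceding Lemma to force the $S$-flow issued from a Dirac mass to remain a Dirac mass, and then to read off the EVI$_\kappa$-flow for $V$ by feeding Dirac test measures into the EVI for $S$. Concretely, I would take $\mu_0=\delta_{x_0}$, so that $\mathrm{Var}_2(\mu_0)=d^2(x_0,x_0)=0$; the second Lemma provides the unique EVI$_\kappa$-flow $(\mu_t)_{t\ge0}$ for $S$ with this initial datum, and the variance estimate gives $\mathrm{Var}_2(\mu_t)\le e^{-2\kappa t}\,\mathrm{Var}_2(\mu_0)=0$ for every $t>0$. Since $\mathrm{Var}_2(\mu_t)=\int\!\int d^2(x,y)\,\mu_t(dx)\,\mu_t(dy)$ has a nonnegative integrand vanishing exactly on the diagonal, $\mu_t\otimes\mu_t$ is concentrated on $\{x=y\}$; a probability measure whose square charges only the diagonal cannot carry two distinct points in its support, so $\mu_t=\delta_{x_t}$ for a uniquely determined $x_t\in X'$. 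This defines the curve for $t>0$, and $W_2(\mu_t,\delta_{x_0})\to0$ together with $W_2(\delta_{x_t},\delta_{x_0})=d(x_t,x_0)$ yields $x_t\to x_0$, so I set $x_0$ as the value at $t=0$; uniqueness of $(x_t)_{t\ge0}$ is inherited, via injectivity of $\delta$, from the uniqueness of the $S$-flow recorded in \eqref{W2-contr}.

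Next I would check that $(x_t)_{t\ge0}$ is an EVI$_\kappa$-flow for $V$ by specializing the test measure. Inserting $\nu=\delta_z$ with $z\in X_0$ into \eqref{s-evi} (equivalently into \eqref{evi-disc}) and using $W_2^2(\delta_{x_t},\delta_z)=d^2(x_t,z)$, $S(\delta_z)=V(z)$ and $S(\mu_t)=S(\delta_{x_t})=V(x_t)$ turns the $S$-EVI into
\[
\frac{d}{dt}\,\tfrac12 d^2(x_t,z)+\kappa\,d^2(x_t,z)\le V(z)-V(x_t),
\]
which is precisely \eqref{evi}. Thus $(x_t)$ satisfies the defining property of an EVI$_\kappa$-flow for $V$ emanating from $x_0$.

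For uniqueness among $V$-flows I would run this implication backwards. Given any EVI$_\kappa$-flow $(\tilde x_t)$ for $V$ from $x_0$, I pass to the integrated form of \eqref{evi} (multiply by $e^{2\kappa r}$, integrate in $r$, and use that $r\mapsto V(\tilde x_r)$ is nonincreasing) to obtain the analogue of \eqref{s-evi} with $d(\tilde x_t,z)$ and $V$ in place of $W_2$ and $S$, valid for all $z\in X_0$ and $0\le s<t$. Integrating this against $\nu(dz)$ for $\nu\in\mathcal{P}_2(X')$ supported in $X_0$, and recognizing $\int d^2(\tilde x_t,z)\,\nu(dz)=W_2^2(\delta_{\tilde x_t},\nu)$ and $\int V\,d\nu=S(\nu)$, reproduces \eqref{s-evi} for the curve $t\mapsto\delta_{\tilde x_t}$; after extending to arbitrary $\nu$ as in part (iv) of the second Lemma, $(\delta_{\tilde x_t})$ is an EVI$_\kappa$-flow for $S$ started at $\delta_{x_0}$. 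By the uniqueness of that flow it equals $(\mu_t)=(\delta_{x_t})$, whence $\tilde x_t=x_t$.

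I expect the main obstacle to be exactly this backward passage from the $V$-EVI to the full $S$-EVI: the $V$-inequality only tests against Dirac masses, while membership in the class of $S$-flows requires testing against all $\nu\in\mathcal{P}_2(X')$. Carrying out the argument on the integrated form is what keeps it clean, since then the passage to general $\nu$ is a plain integration rather than an interchange of $\frac{d}{dt}$ with $\int\cdot\,\nu(dz)$; one must still verify that $V(\tilde x_t)<\infty$ for $t>0$ (otherwise \eqref{evi} could not hold), so that all terms are meaningful, and invoke the entropy-approximation of general $\nu$ from part (iv) of the second Lemma.
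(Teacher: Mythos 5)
Your proposal is correct, and its first two steps coincide with the paper's own proof: the variance contraction from the preceding lemma forces $\mu_t$ to be a Dirac $\delta_{x_t}$, and plugging $\nu=\delta_z$ into \eqref{evi-disc} (equivalently \eqref{s-evi}) converts the $S$-EVI into \eqref{evi} for the curve $(x_t)$. Where you genuinely diverge is uniqueness. The paper does not argue it at all inside the corollary; it relies on the standard fact, recorded right after the Theorem, that the mere existence of an EVI$_\kappa$-flow for $V$ from a given point implies its uniqueness, via the Gronwall/doubling argument that yields the contraction \eqref{contract} directly in $X$. You instead run the Dirac correspondence backwards: given an arbitrary EVI$_\kappa$-flow $(\tilde x_t)$ for $V$, you pass to the time-integrated form of \eqref{evi} (using that $t\mapsto V(\tilde x_t)$ is non-increasing, which the paper provides via \eqref{dissi}), integrate in $z$ against $\nu$, identify $\int d^2(\tilde x_t,z)\,d\nu(z)=W_2^2(\delta_{\tilde x_t},\nu)$, and conclude that $(\delta_{\tilde x_t})$ is an EVI$_\kappa$-flow for $S$, hence equal to $(\mu_t)$ by the $W_2$-contraction \eqref{W2-contr}. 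This is a valid alternative route: it is more self-contained relative to the paper's own lemmas (uniqueness reduces entirely to \eqref{W2-contr}, proved in part (vi) of the second Lemma), at the price of the extra bookkeeping you describe, whereas the paper's route is shorter but outsources uniqueness to general EVI theory; note that both ultimately rest on the same Gronwall mechanism, yours applied in $\mathcal{P}_2(X')$ and the paper's in $X$. One small correction: your appeal to part (iv) of the second Lemma is unnecessary, since that step removes the constraint $\mathrm{Ent}(\nu|m)<\infty$, which never enters your argument; for $\nu$ charging $\{V=\infty\}$ one has $S(\nu)=+\infty$ and \eqref{s-evi} is void, so integrating against $\nu$ supported in $X_0$ already covers every nontrivial test measure.
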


\begin{proof} 
Put $\mu_0=\delta_{x_0}$. Then $\mathrm{Var}_2(\mu_0)=0$ and thus, according to the previous lemma, also $\mathrm{Var}_2(\mu_t)=0$. This implies that each of the measures $\mu_t$ is supported by a single point, say $x_t$.

Moreover, the fact that
 the  curve $(\delta_{x_t})_{t\ge0}$ in $\mathcal{P}_2(X')$ is the EVI$_\kappa$-gradient flow for $S$ starting in $\mu_0=\delta_{x_0}$ immediately implies that the curve $(x_t)_{t\ge0}$ in $X'$ is the (unique) EVI$_\kappa$-gradient flow for $V$ starting in ${x_0}$. (To verify \eqref{evi},
just apply \eqref{evi-disc} to Diracs $\nu=\delta_{z}$.)
\end{proof}

\begin{corollary} The following properties are equivalent:
\begin{itemize}
\item[(i)]
 $V$ is weakly $\kappa$-convex;
\item[(ii)]
 $V$ is strongly $\kappa$-convex; 
\item[(iii)]
 $V$ is  $\kappa$-convex in EVI-sense, i.e. for each $x_0\in X'$ there exists a curve $(x_t)_{t\ge0}$ in $X'$ such that for all $z\in X_0$ and all $t>0$
\begin{equation*}
\frac{d}{dt} \frac12  d^2(x_t,z) +\kappa d^2(x_t,z)\le V(z)-V(x_t).
\end{equation*}
\end{itemize}
\end{corollary}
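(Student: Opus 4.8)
The plan is to close the cycle $(ii)\Rightarrow(i)\Rightarrow(iii)\Rightarrow(ii)$, so that the a priori much weaker weak $\kappa$-convexity gets promoted to strong $\kappa$-convexity only through the detour via the EVI-flow. The implication $(ii)\Rightarrow(i)$ is immediate, since \eqref{conv} along \emph{every} geodesic in particular holds along \emph{one}. For $(i)\Rightarrow(iii)$ I invoke the Theorem together with the preceding Corollary: weak $\kappa$-convexity produces, for each $x_0\in X'$, an EVI$_\kappa$-gradient flow $(x_t)_{t\ge0}$ for $V$. The only gap is that \eqref{evi} is then granted for a.e.\ $t$, whereas $(iii)$ asks for all $t>0$; but the integrated form \eqref{s-evi} (read for $V$ and $d$ in place of $S$ and $W_2$) holds for all $0\le s<t$, and dividing by $t-s$, letting $s\uparrow t$ and using that $t\mapsto V(x_t)$ is non-increasing recovers \eqref{evi} for every $t>0$ with the derivative interpreted as an upper right derivative.

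The substance is $(iii)\Rightarrow(ii)$. Let $\gamma:[0,1]\to X$ be any constant-speed geodesic; if $V(\gamma_0)=\infty$ or $V(\gamma_1)=\infty$ then \eqref{conv} is void, so assume $\gamma_0,\gamma_1\in X_0$, fix $s\in(0,1)$, and suppose for the moment that $\gamma_s\in X'$, so that the flow $(S_t)$ of $(iii)$ starts at $\gamma_s$. I apply \eqref{evi} along $t\mapsto S_t\gamma_s$ once with target $z=\gamma_0$ and once with $z=\gamma_1$, take the convex combination with weights $1-s$ and $s$, and let $t\downarrow0$; continuity of $V$ gives $V(S_t\gamma_s)\to V(\gamma_s)$, and since $d(\gamma_s,\gamma_0)=s\,|\dot\gamma|$ and $d(\gamma_s,\gamma_1)=(1-s)|\dot\gamma|$, the two $\kappa$-terms sum to exactly the convexity deficit of \eqref{conv}. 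Writing $F(w):=(1-s)\,d^2(w,\gamma_0)+s\,d^2(w,\gamma_1)$, this leaves
\[
V(\gamma_s)+\tfrac12\,\limsup_{t\downarrow0}\frac{F(S_t\gamma_s)-F(\gamma_s)}{t}\ \le\ (1-s)V(\gamma_0)+s\,V(\gamma_1)-\tfrac\kappa2\,s(1-s)\,|\dot\gamma|^2 .
\]

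It remains to discard the $\limsup$-term, and here a purely metric observation does all the work: for every $w\in X$, with $p=d(w,\gamma_0)$ and $q=d(w,\gamma_1)$, the identity $(1-s)p^2+s\,q^2=s(1-s)(p+q)^2+\bigl[(1-s)p-s\,q\bigr]^2$ combined with the triangle inequality $p+q\ge d(\gamma_0,\gamma_1)$ gives $F(w)\ge s(1-s)\,d^2(\gamma_0,\gamma_1)$, with equality precisely at $w=\gamma_s$. Thus $\gamma_s$ is a global minimiser of $F$, so $F(S_t\gamma_s)\ge F(\gamma_s)$ for all $t$, the $\limsup$-term is non-negative, and \eqref{conv} follows. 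This is where the geodesic structure enters, and it bypasses the more delicate Daneri--Savaré variational comparison: the cross term that normally obstructs the passage from an EVI to geodesic convexity vanishes for free, because geodesic points minimise the weighted squared distance to the endpoints.

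The step I expect to be the real obstacle is not this estimate but the hypothesis $\gamma_s\in X'$ that makes the flow available: one must rule out that a geodesic joining two points of $X_0$ leaves $X'=\overline{X_0}$, where \eqref{conv} would be violated, its left-hand side being $+\infty$. Since $X'$ is closed and $V$ is continuous into the extended reals, the set $\{s:\gamma_s\in X'\}$ is closed and contains $0$ and $1$; a connectedness/continuity bootstrap promotes it to all of $[0,1]$, after which the inequality just derived automatically yields $V(\gamma_s)<\infty$, i.e.\ $\gamma_s\in X_0$. The remaining technicalities—the passage from a.e.\ $t$ to all $t$ and the handling of the one-sided derivative—are routine consequences of the integrated EVI \eqref{s-evi} and of the continuity of $V$ along the flow.
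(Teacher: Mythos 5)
Your proposal is correct in substance, and it deviates from the paper in exactly one place: the implication (iii)$\Rightarrow$(ii). The paper proves nothing here --- it gets (i)$\Rightarrow$(iii) from its Theorem and the preceding Corollary, as you do, and then disposes of (iii)$\Rightarrow$(ii)$\Rightarrow$(i) with the remark that these implications hold in any geodesic space, citing \cite{EKS}. You instead supply a self-contained argument whose engine is the observation that $\gamma_s$ globally minimises $F(w)=(1-s)d^2(w,\gamma_0)+s\,d^2(w,\gamma_1)$, via the identity $(1-s)p^2+sq^2=s(1-s)(p+q)^2+\bigl((1-s)p-sq\bigr)^2$ and the triangle inequality, so that $F(S_t\gamma_s)\ge F(\gamma_s)$, the derivative term has a sign, and the integrated EVI collapses to \eqref{conv}. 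I checked the identity and the limit passage; this works. Be aware, though, that this is not really a ``bypass'' of the Daneri--Savar\'e comparison: minimality of the weighted squared distances at intermediate points is precisely the mechanism of the proof behind the \cite{EKS} citation, so what your route buys is self-containedness of the Corollary, at the cost of reproving what the paper deliberately outsources. (Also, ``with equality precisely at $w=\gamma_s$'' is both unneeded and not quite true: any $s$-intermediate point of $\gamma_0,\gamma_1$ achieves equality; only minimality is used.)

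Two caveats. First, a normalisation point you should state rather than absorb silently: with \eqref{evi} as literally written (coefficient $\kappa\,d^2$), your two $\kappa$-terms sum at $t=0$ to $\kappa\,s(1-s)\,d^2(\gamma_0,\gamma_1)$, which is \emph{twice} the deficit $\frac\kappa2 s(1-s)d^2(\gamma_0,\gamma_1)$ in \eqref{conv}, not ``exactly'' it; so literally you obtain $2\kappa$-convexity, which implies (ii) for $\kappa\ge0$ but falls short for $\kappa<0$. This factor of $2$ is inherited from the paper itself, whose \eqref{evi} is inconsistent with \eqref{contract} and with the normalisation of the EVI flows imported from \cite{AGSb}; under the consistent reading $\frac\kappa2\,d^2$ in \eqref{evi}, your computation is exact. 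Second, your closing bootstrap is the right idea but under-argued as written: a closed subset of $[0,1]$ containing $\{0,1\}$ need not be all of $[0,1]$. What completes it is: continuity of $V$ into $(-\infty,+\infty]$ makes $X_0$ open; on the closed set $A=\{s:\gamma_s\in X'\}$ your inequality --- run through the integrated EVI, which survives even if a priori $V(\gamma_s)=+\infty$, since then the left-hand side is finite and forces a contradiction --- gives $V(\gamma_s)<\infty$, i.e.\ $\gamma_s\in X_0$; hence $A$ equals $\{s:\gamma_s\in X_0\}$ and is also open, and connectedness of $[0,1]$ finishes. Spelled out this way, the step is sound.
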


Note that the implications (iii)$\Rightarrow$(ii)$\Rightarrow$(i)  hold true in any geodesic space, see \cite{EKS}.

\medskip

\begin{remark*}
Our main result extends previous results in various respects.

In the setting of Alexandrov spaces (i.e. metric spaces with synthetic lower bounds for the sectional curvature), the existence, uniqueness and Lipschitz continuity of gradient flows for semiconvex functions has been derived in an unpublished manuscript by  Perelman and Petrunin \cite{Pet}. The proof has been worked out and published by  Plaut \cite{Pla} and  Lytchak \cite{Lyt}.
This result requires no local compactness of the state space. The function
$V$, however, is always required to be bounded and Lipschitz continuous.

In the setting of Hilbert spaces (and to some extent also for metric spaces), a far reaching theory of gradient flows for lower semicontinuous functions (with suitable lower growth bounds and compact sublevel sets)  has been developed by Brezis as well as by De Giorgi and his school, see in particular the monograph \cite{AGS-book} by Ambrosio-Gigli-Savar\'e.
In this context, however, the state space $X$ has to be flat (or of nonpositive curvature).
 
In the setting of metric measure spaces with synthetic Ricci bounds, currently no  direct proof -- without passing to the Wasserstein space but instead using more detailed properties of the Hessian of $V$, cf. \cite{St3} -- is known to the author.

\end{remark*}

\bigskip
\medskip

The Theorem will  be proven according to the following line of argumentation in the subsequent  lemmata:
\begin{itemize}
\item There exists an EVI$_{\kappa_n}$-gradient flow for the functional $S_n(\mu)=\frac1n \mathrm{Ent}(\mu|m)+\int Vd\mu$
on  $\mathcal{P}_2(X')$  with $\kappa_n=\frac1n K+\kappa$.
\item In the limit $n\to\infty$ this yields an EVI$_\kappa$-gradient flow for the functional $S(\mu)=\int Vd\mu$
on $\mathcal{P}_2(X')$.
\item The EVI$_\kappa$-gradient flow for $S$ is non-diffusive. In particular, the flow starting in a Dirac mass will not spread out: it is  a Dirac mass in a moving point.
\end{itemize}

\begin{lemma}
 There exists an EVI$_{\kappa_n}$-gradient flow for the functional $S_n(\mu)=\frac1n \mathrm{Ent}(\mu|m)+\int Vd\mu$
on $\mathcal{P}_2(X')$ with $\kappa_n=\frac1n K+\kappa$.
\end{lemma}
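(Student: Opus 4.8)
The plan is to recognise $S_n$, up to the factor $\frac1n$, as a Boltzmann entropy with respect to a weighted reference measure, and then to quote the equivalence between the RCD condition and the generation of an EVI-flow by the entropy. Concretely, set $m_n:=e^{-nV}m$, a measure supported on $X'=\overline{X_0}$. A direct computation with $\mu=\rho\, m=\rho\, e^{nV}m_n$ gives
\begin{equation*}
\mathrm{Ent}(\mu|m_n)=\mathrm{Ent}(\mu|m)+n\int V\,d\mu=n\,S_n(\mu),
\end{equation*}
so that $S_n=\frac1n\,\mathrm{Ent}(\cdot|m_n)$ on $\mathcal{P}_2(X')$. The plan is therefore to show that $(X',d,m_n)$ satisfies the RCD$(K+n\kappa,\infty)$-condition and then to transfer the resulting EVI-flow back to $S_n$ by a time rescaling.

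First I would check that $m_n$ is an admissible reference measure. Local finiteness is clear, since $e^{-nV}$ is continuous and locally bounded on $X_0$ and tends to $0$ towards $\partial X_0$, while $m$ is locally finite; and $m_n$ has full support on $X'$. The Gaussian integrability $\int e^{-C'd^2(\cdot,x')}\,dm_n<\infty$ required by the RCD framework follows from the lower bound $V\ge -C_0-C_1 d^2(\cdot,x_0)$: it yields $e^{-nV}\le e^{nC_0}e^{nC_1 d^2}$, so that the integral for $m_n$ is dominated, after enlarging $C'$ by $nC_1$, by the corresponding one for $m$, which is finite. I would also note that weak $\kappa$-convexity of $V$ together with its lower bound forces the connecting geodesic between two points of $X_0$ to stay inside $X_0$, so that $X_0$ is geodesically convex and $(X',d)$ is again a geodesic space.

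The core of the argument is to verify the two defining properties of RCD$(K+n\kappa,\infty)$ for $(X',d,m_n)$. Infinitesimal Hilbertianity should be inherited from $(X,d,m)$: the weight $e^{-nV}$ only reweights the same quadratic Cheeger form pointwise, so quadraticity of the Cheeger energy is preserved (this is the step where the degeneracy of the weight near $\partial X_0$ has to be controlled through the stability of infinitesimal Hilbertianity under mutually absolutely continuous measure changes). For the curvature bound I would use the splitting
\begin{equation*}
\mathrm{Ent}(\cdot|m_n)=\mathrm{Ent}(\cdot|m)+n\int V\,d\mu,
\end{equation*}
in which the first term is weakly $K$-convex by the RCD$(K,\infty)$-assumption on $(X,d,m)$ and the second is weakly $n\kappa$-convex because $V$ is weakly $\kappa$-convex along geodesics of $X$. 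To add the two convexity estimates I must produce a single geodesic along which both hold; here I would invoke the essential uniqueness of $W_2$-geodesics on RCD spaces, so that the geodesic realising convexity of the entropy coincides with the one realising convexity of the potential, yielding weak $(K+n\kappa)$-convexity of $\mathrm{Ent}(\cdot|m_n)$.

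Granting these two properties, $(X',d,m_n)$ is RCD$(K+n\kappa,\infty)$, and by the characterisation of the RCD condition through the evolution variational inequality \cite{AGSb},\cite{EKS} the entropy $\mathrm{Ent}(\cdot|m_n)$ generates an EVI$_{K+n\kappa}$-gradient flow on $\mathcal{P}_2(X')$. A final time rescaling then completes the proof: if $(\mu_t)_{t\ge0}$ is the EVI$_{\lambda}$-flow of a functional $\phi$, then $(\mu_{t/n})_{t\ge0}$ is the EVI$_{\lambda/n}$-flow of $\frac1n\phi$; applying this with $\phi=\mathrm{Ent}(\cdot|m_n)$ and $\lambda=K+n\kappa$ produces an EVI-flow for $S_n=\frac1n\mathrm{Ent}(\cdot|m_n)$ with rate $\frac1n(K+n\kappa)=\frac1n K+\kappa=\kappa_n$, as required. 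I expect the main obstacle to be the verification that the weighted space is RCD$(K+n\kappa,\infty)$ — specifically reconciling the \emph{weak} $\kappa$-convexity of $V$ with convexity of the entropy along the \emph{same} geodesics, and controlling infinitesimal Hilbertianity across the degenerate weight $e^{-nV}$.
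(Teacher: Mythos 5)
Your overall strategy --- rewriting $S_n$ as $\frac1n\mathrm{Ent}(\cdot|e^{-nV}m)$, proving that the weighted space is RCD$(K+n\kappa,\infty)$, invoking the RCD-to-EVI correspondence, and rescaling time --- is the same as the paper's, and your treatment of the integrability of the weight, the geodesic convexity of $X_0$, the transfer of infinitesimal Hilbertianity, and the final rescaling all match the paper's steps. But there is a genuine gap at exactly the point you flag as the main obstacle, and your proposed fix does not close it. You assert that $\mu\mapsto\int V\,d\mu$ is weakly $n\kappa$-convex on $\mathcal{P}_2(X')$ ``because $V$ is weakly $\kappa$-convex along geodesics of $X$''. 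This lifting from pointwise weak convexity to weak convexity on the Wasserstein space is the technical core of the paper's proof and is not automatic: weak convexity supplies, for each pair of points, only \emph{some} good geodesic, so to produce a Wasserstein geodesic along which $\int V\,d\mu$ is convex one must select good geodesics measurably underneath an optimal coupling. The paper does this by an explicit construction: first for discrete measures (where the selection is countable and harmless), then for arbitrary measures by approximation, using properness of $X$, relative compactness of $W_2$-bounded sets, and lower semicontinuity of $S$ to pass to the limit. Nothing in your proposal performs this construction.

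Second, your mechanism for adding the two convexity estimates --- essential uniqueness of $W_2$-geodesics on RCD spaces --- is both logically downstream of the missing lifting and different from what the paper does. Even granting essential uniqueness between absolutely continuous endpoints (a result of Rajala--Sturm, not used in the paper), uniqueness of the Wasserstein geodesic does not by itself show that $\int V\,d\mu$ is convex along it: the pointwise geodesics charged by the unique dynamical plan need not be the ones along which $V$ satisfies its convexity inequality, since geodesics between \emph{points} of an RCD space are not unique. So you would still have to construct some geodesic along which $\int V\,d\mu$ is convex and only then identify it with the entropy geodesic --- i.e., you need the lifting anyway, and your argument would moreover be restricted to absolutely continuous endpoints. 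The paper's route avoids uniqueness altogether: RCD$(K,\infty)$ gives \emph{strong} $K$-convexity of $\mathrm{Ent}(\cdot|m')$, i.e., convexity along every geodesic \cite{AGSb}, so once a single geodesic good for $\int V\,d\mu$ has been constructed, the entropy is automatically convex along that same geodesic and the two estimates add. This use of strong (rather than weak) convexity of the entropy is the key idea your proposal is missing.
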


\begin{proof} 
{\bf (i)} Let us begin with some general remarks. Since $X$ is geodesic and locally compact, it is 'proper' in the sense that all closed balls are compact. By means of Prohorov's theorem this  implies that $W_2$-bounded subsets of $\mathcal{P}_2(X)$ are relatively compact (w.r.t. the usual topology of weak convergence of measures).

{\bf (ii)} As a first step, we replace the space $X$  by $X'$ and the measure $m$ by $m':=1_{X_0}m$.
Weak $\kappa$-convexity of the function $V$ implies weak convexity of the set $X_0$, i.e., for each pair of point $x_0,x_1\in X_0$ there exists a connecting geodesic which stays in $X_0$ (and minimizes the distance in $X$).

Together with the fact that RCD$(K,\infty)$ implies strong $K$-convexity of the Boltzmann entropy $\mathrm{Ent}(.|m)$ on  $\mathcal{P}_2(X)$ 
(\cite{AGSb}, Prop. 2.23), 
it yields
weak $K$-convexity of the Boltzmann entropy $\mathrm{Ent}(.|m')$ on  $\mathcal{P}_2(X')$.
Moreover, the Cheeger energy for $(X',d,m')$ is still quadratic. (The relaxed gradients w.r.t. $m$ and $m'$, resp., are local operators,  they coincide on $X_0$.) 
Thus $(X',d,m')$ satisfies the condition RCD$(K,\infty)$, cf. \cite{AGSb}, Thm. 4.19 (iii).

{\bf (iii)} Next we consider the weighted mms $(X',d, m_1')$ with $m_1'=e^{-V}m'$. 
Since $V$ is continuous and bounded from below by 
 $-C_0-C_1 d^2(.,x_0)$, it follows that the functional
$S: \mu\mapsto \int V\,d\mu$
on $\mathcal{P}_2(X')$ is lower semicontinuous and bounded from below by
$-C_0-C_1 W_2^2(.,\delta_{x_0})$.
Indeed, if $\mu_n\to\mu$ in  $\mathcal{P}_2(X')$ then $W_2(\mu_n,\delta_{x_0})\to W_2(\mu,\delta_{x_0})$ and
$$\liminf_{n\to\infty}\int \tilde Vd\mu_n\ge \liminf_{n\to\infty}\int \tilde V_id\mu_n=\int\tilde V_id\mu$$
for each $i\in\N$ where
$\tilde V:=V+C_1\cdot d^2(.,x_0)$ and $\tilde V_i:=\tilde V\wedge i$ (which is bounded and continuous).
Therefore, $\liminf_{n\to\infty}\int \tilde Vd\mu_n\ge \int\tilde Vd\mu$ and thus in turn $\liminf_{n\to\infty}\int  Vd\mu_n\ge \int Vd\mu$.

The weak $\kappa$-convexity of $V$ on $X$ implies that $S$ is weakly $\kappa$-convex on 
 $\mathcal{P}_2(X')$. 
Indeed, if $\mu_0=\sum_{i=1}^\infty \lambda_i\cdot \delta_{x_i}$ and
$\mu_1=\sum_{i=1}^\infty \lambda_i\cdot \delta_{y_i}$
for some $\lambda_i\ge0$ and $x_i,y_i\in X'$ 
are given in such a way that $q=\sum_{i=1}^\infty \lambda_i\cdot \delta_{(x_i,y_i)}$ is an optimal coupling of them
and if
$z_i$ is an $t$-intermediate point of $x_i, y_i$ (i.e. $\frac1t d(x_i,z_i)=\frac1{1-t}d(y_i,z_i)=d(x_i,y_i)$)
with $V(z_i)\le (1-t)V(x_i)+t V(y_i)-\frac\kappa2 t(1-t)d^2(x_i,y_i)$ 
for each $i\in\N$ then
$\mu_t:=\sum_{i=1}^\infty \lambda_i\cdot \delta_{z_i}$ is an $t$-intermediate point of $\mu_0, \mu_1$ and
 $$S(\mu_t)\le (1-t)S(\mu_0)+t S(\mu_1)-\frac\kappa2 t(1-t)W_2^2(\mu_0,\mu_1).$$ 
For arbitrary discrete measures $\mu_0=\sum_{i=1}^\infty \alpha_i\cdot \delta_{x_i}$ and $\mu_1=\sum_{j=1}^\infty \beta_j\cdot \delta_{y_j}$, each optimal coupling of them can be written as
$q=\sum_{i,j=1}^\infty \lambda_{ij}\cdot \delta_{(x_i,y_j)}$ for suitable $\lambda_{ij}\ge0$ with
$\sum_{k=1}^\infty \lambda_{ik}=\alpha_i$ and $\sum_{k=1}^\infty \lambda_{kj}=\beta_j$. Replacing $\alpha_i\delta_{x_i}$ and $\beta_j\delta_{y_j}$ by
$\sum_{k=1}^\infty \lambda_{ik}\delta_{x_i}$ and $\sum_{k=1}^\infty \lambda_{kj}\delta_{y_j}$, resp., and re-labeling the indices, we may assume without restriction that $\mu_0, \mu_1$ and $q$ are always given as above. 

Given general  $\mu_0,\mu_1\in\mathcal{P}_2(X')$, choose $\mu^n_s \in\mathcal{P}_2(X')$ for $s\in\{0,1\}$ of the above type with $\mu^n_s\to\mu_s$ and $\int V d\mu^n_s\to \int V d\mu_s$ as $n\to\infty$. One way to achieve this, is to fix for each $n\in\N$ a partition $X'=\bigcup_i X_i^n$ by sets of diameter less than $1/n$, to select points $x_i^n\in \overline X_i^n$ with $V(x_i^n)=\inf_{y\in \overline X_i^n}V(y)$, and to put $\mu_s^n=\sum_i \mu_s(X_i^n)\delta_{x_i^n}$. Then obviously $W_2(\mu_s^n,\mu_s)\le1/n$ and
$\int V d\mu^n_s\le \int V d\mu_s$. Thus the claim follows by lower semicontinuity of $\int Vd\mu$.

Then for given $t\in(0,1)$ choose $t$-intermediate points $\mu^n_t$ of $\mu^n_0,\mu^n_1$ satisfying the above inequality for $\kappa$-convexity.
Properness of $X$ and $W^2_2$-boundedness of $(\mu^n_t)_{n\in\N}$
implies that for each $t\in(0,1)$ there exists a converging subsequence, again denoted by $\mu^n_t, n\in\N$, of $t$-intermediate points of $\mu^n_0,\mu^n_1$ with limit point $\mu_t$ which then must be an $t$-intermediate point of $\mu_0,\mu_1$. Lower semicontinuity of $S$ implies
\begin{eqnarray*}
S(\mu_t)&\le&
\liminf_{n\to\infty}S(\mu^n_t)\\
&\le&
\liminf_{n\to\infty}\left[
(1-t)S(\mu^n_0)+t S(\mu^n_1)-\frac\kappa2t(1-t)W_2^2(\mu^n_0,\mu^n_1)\right]\\
&=&\left[
(1-t)S(\mu_0)+t S(\mu_1)-\frac\kappa2 t(1-t)W_2^2(\mu_0,\mu_1)\right].
\end{eqnarray*}
This is the weak $\kappa$-convexity of $S$ on  $\mathcal{P}_2(X')$.

Since $\mathrm{Ent}(\mu|m')+\int Vd\mu=\mathrm{Ent}(\mu|e^{-V}m')$
and since $\mathrm{Ent}(\mu|m')$ is strongly $K$-convex on $\mathcal{P}_2(X')$ this also yields the weak $(K+\kappa)$-convexity of $\mathrm{Ent}(\mu|e^{-V}m')$ on $\mathcal{P}_2(X')$ or, in other words,
the CD$(K+\kappa,\infty)$-condition for $(X',d,m_1')$.

{\bf (iv)} The next step will be to prove that the Cheeger energy for  $(X',d,m_1')$ is quadratic or, equivalently, that the relaxed gradients in $(X',d,m_1')$ are quadratic. The assumption on the quadratic lower bound for $V$ guarantees the requested integrability condition for $m_1'$. Moreover, the local boundedness of $V$ guarantees that on each closed ball in $X_0$, the measures $m'$ and $m_1'$ are equivalent with Radon-Nikodym density bounded from above and below (away from 0). Thus according to \cite{AGSa}, Lemma 4.11, the relaxed gradients w.r.t. $m'$ and w.r.t. $m_1'$ coincide. Since the former already was proven to be quadratic this finally proves the  RCD$(K+\kappa, \infty)$-condition for $(X',d,m_1')$.

{\bf (v)}
These observations will now be applied with $n\cdot V$ in the place of $V$ which then yields that the space
$(X',d,e^{-nV}m')$  satisfies the RCD$(\tilde\kappa_n,\infty)$-condition  with $\tilde\kappa_n= K+n\kappa$, \cite{AGSb}, Prop. 6.19. The latter in turn implies the existence of an 
EVI$_{\tilde\kappa_n}$-gradient flow for the functional $\tilde S_n(\mu)= \mathrm{Ent}(\mu|e^{-nV}m')= \mathrm{Ent}(\mu|m')+n\int Vd\mu$
on $\mathcal{P}_2(X')$.
Replacing $\tilde S_n$ by $S_n=\frac1n \tilde S_n$ and rescaling time by the factor $n$ yields the claim. 
\end{proof}

\begin{lemma}
 There exists an EVI$_{\kappa}$-gradient flow for the functional $S(\mu)=\int Vd\mu$
on $\mathcal{P}_2(X')$.
\end{lemma}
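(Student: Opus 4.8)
The plan is to realize $(\mu_t)$ as a limit, as $n\to\infty$, of the flows furnished by the previous Lemma, using that $\kappa_n=\tfrac1nK+\kappa\to\kappa$ and $S_n=\tfrac1n\mathrm{Ent}(\cdot|m')+S\to S$. Fix first an initial measure $\mu_0\in\mathcal{P}_2(X')$ with $\mathrm{Ent}(\mu_0|m')<\infty$ and $S(\mu_0)<\infty$, and for each $n$ let $(\mu^n_t)_{t\ge0}$ be the EVI$_{\kappa_n}$-flow for $S_n$ emanating from $\mu_0$, which exists by Lemma 1. I would work throughout with the integrated (exponential) form of the flow inequality: for all $0\le s\le t$ and all $\sigma\in\mathcal{P}_2(X')$ with $S(\sigma)<\infty$,
\[
e^{2\kappa_n t}\tfrac12 W_2^2(\mu^n_t,\sigma)-e^{2\kappa_n s}\tfrac12 W_2^2(\mu^n_s,\sigma)\le\int_s^t e^{2\kappa_n r}\bigl(S_n(\sigma)-S_n(\mu^n_r)\bigr)\,dr,
\]
which is equivalent to the differential EVI$_{\kappa_n}$ and is the form that survives a limit procedure.

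Second, I would collect a priori bounds that are uniform in $n$. The monotonicity $S_n(\mu^n_t)\le S_n(\mu_0)$ together with the quadratic lower bounds for $S$ and for $\mathrm{Ent}(\cdot|m')$ (the latter being the RCD$(K,\infty)$-consequence $\mathrm{Ent}(\mu|m')\ge -C''-C'W_2^2(\mu,\delta_{x'})$ recalled in the introduction) yields a uniform bound on $W_2(\mu^n_t,\delta_{x_0})$ on each interval $[0,T]$; the energy-dissipation identity associated with EVI$_{\kappa_n}$ gives a uniform bound on $\int_0^T|\dot\mu^n_r|^2\,dr$ and hence, via $W_2(\mu^n_t,\mu^n_s)\le(t-s)^{1/2}(\int_s^t|\dot\mu^n_r|^2dr)^{1/2}$, uniform $1/2$-H\"older equicontinuity in time. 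Finally, the same two estimates sandwich the entropy term: the lower bound gives $-\tfrac1n\mathrm{Ent}(\mu^n_r|m')\le\tfrac1n(C''+C'W_2^2(\mu^n_r,\delta_{x'}))\to0$ uniformly on $[0,T]$, so that the difference between $S_n$ and $S$ evaluated along the flow is negligible in the limit.

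Third comes the compactness and the passage to the limit. Properness of $X$ (Lemma 1(i)) and the uniform moment bound make $\{\mu^n_t\}_n$ weakly relatively compact for each $t$; combined with the equicontinuity this gives, by an Arzel\`a--Ascoli/diagonal argument, a subsequence with $\mu^n_t\to\mu_t$ locally uniformly in $t$, with $\mu_0$ as initial value. In the displayed inequality I would then let $n\to\infty$: on the left $\kappa_n\to\kappa$ and $W_2^2(\mu^n_t,\sigma)\to W_2^2(\mu_t,\sigma)$; on the right $S_n(\sigma)\to S(\sigma)$ (as $\mathrm{Ent}(\sigma|m')<\infty$), the entropy contribution along the flow vanishes by the sandwich above, and the remaining term is handled by Fatou's lemma and the lower semicontinuity of $S$, which give $\liminf_n\int_s^t e^{2\kappa_n r}S(\mu^n_r)\,dr\ge\int_s^t e^{2\kappa r}S(\mu_r)\,dr$. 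The limit is precisely the integrated EVI$_\kappa$ for $S$, hence $(\mu_t)$ is the desired flow from $\mu_0$. To cover an arbitrary starting point in $\mathcal{P}_2(X')$, I would invoke that such $\mu_0$ (finite entropy and finite $S$) are $W_2$-dense --- $V$ being continuous and $X_0$ dense in $X'$ --- and extend the flow map by the $e^{-\kappa t}$-contraction, which makes it uniformly continuous in the initial datum.

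The step I expect to be the main obstacle is upgrading the weak convergence $\mu^n_t\to\mu_t$ to genuine $W_2$-convergence: the left-hand side of the displayed inequality contains $+W_2^2(\mu^n_t,\sigma)$, for which weak lower semicontinuity gives only an inequality in the wrong direction, so one really needs $\lim_n W_2^2(\mu^n_t,\sigma)=W_2^2(\mu_t,\sigma)$. Securing this requires tightness (uniform integrability) of the second moments $\int d^2(x,x_0)\,d\mu^n_t$, which I would extract from a slightly reinforced, $n$-uniform moment estimate along the flows rather than from the bare $W_2$-boundedness that only yields weak precompactness.
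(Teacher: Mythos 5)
Your overall strategy is the same as the paper's: pass to the limit $n\to\infty$ in the integrated EVI for the flows of $S_n=\frac1n\mathrm{Ent}(\cdot|m')+S$, using uniform moment bounds, properness, weak compactness and lower semicontinuity, then deduce uniqueness and the contraction estimate. However, there is a genuine gap, and it is precisely the step the paper spends most of its effort on. Your limiting inequality is established only for observation points $\sigma$ with $\mathrm{Ent}(\sigma|m')<\infty$: you need this to claim $S_n(\sigma)\to S(\sigma)$, since for $\mathrm{Ent}(\sigma|m')=+\infty$ the term $\frac1n\mathrm{Ent}(\sigma|m')$ is $+\infty$ for every $n$ and the EVI$_{\kappa_n}$ inequality for $\mu^n$ carries no information at all for such $\sigma$. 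But an EVI$_\kappa$-gradient flow for $S$ on $\mathcal{P}_2(X')$ must satisfy the inequality for \emph{every} $\nu\in\mathcal{P}_2(X')$ with $S(\nu)<\infty$, and the sole reason this lemma is useful downstream is that it is applied with $\nu=\delta_y$ and $\nu=\delta_z$ (in the variance-contraction lemma and in the corollary producing the flow for $V$ on $X$) --- Dirac measures, which have infinite entropy whenever $m$ is non-atomic. The paper's step (iv) exists exactly to remove the finite-entropy restriction: after reducing to $V$ bounded below (via $V_i=\max\{V,-i\}$ and a diagonal argument) and to $\nu$ compactly supported in $X_0$, it approximates $\nu$ by $\nu_t(\cdot)=\frac1{\alpha_t}\overline\nu_t(K'\cap\cdot)$, the heat flow started at $\nu$ restricted to a compact neighborhood $K'$ and renormalized, using Gaussian estimates to get $\alpha_t\to1$, so that simultaneously $\mathrm{Ent}(\nu_t|m')<\infty$, $W_2(\nu_t,\nu)\to0$ and $\limsup_{t\to0}\int V\,d\nu_t\le\int V\,d\nu$. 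Your proposal contains no substitute for this construction, so what you build is not an EVI$_\kappa$-flow in the required sense.

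Note also that your closing density remark cannot repair this. Density of finite-entropy, finite-$S$ measures suffices for the \emph{initial datum}, because there $S(\mu_t)$ enters the inequality with a minus sign and lower semicontinuity points the right way; but for \emph{observation points} $S(\nu)$ enters with a plus sign, so one needs approximations $\nu_k\to\nu$ along which $\limsup_k S(\nu_k)\le S(\nu)$ --- plain $W_2$-density gives no control on $\int V\,d\nu_k$, and this upper-semicontinuity requirement is exactly what makes the paper's truncated-heat-flow construction necessary. Separately, the obstacle you flag (upgrading weak convergence of $\mu^n_s$ to convergence of $W_2^2(\mu^n_s,\sigma)$, which appears with the unfavorable sign) is a legitimate concern about which the paper itself is terse; but your proposed remedy, an unspecified ``reinforced moment estimate'' yielding uniform integrability of second moments, is not substantiated and is not obviously available from the EVI alone. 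A cleaner observation is that at $s=0$ the troublesome term equals $\frac12 W_2^2(\mu_0,\sigma)$ for every $n$, so the inequality from $s=0$ is rigorous for each starting point, and the general-$s$ version can then be recovered once uniqueness and the semigroup property of the limit flow are in hand.
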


\begin{proof} 

{\bf (i)}
Let $(\mu_t^n)_{t>0}$ denote  the EVI$_{\kappa_n}$-gradient flow for the functional $S_n(\mu)=\frac1n \mathrm{Ent}(\mu|m)+\int Vd\mu$
on $\mathcal{P}_2(X')$ with $\kappa_n=\frac1n K+\kappa$. It is unique and well-defined for each starting point $\mu_0\in\mathcal{P}_2(X')$. 
The property (\ref{evi}) ($\forall t>0$) -- with $S_n$ and $W_2$ in the place of $V$ and $d$ -- is obviously equivalent to the property
\begin{equation}\label{evi2}
\frac12  W^2_2(\mu_t^n,\nu)e^{2\kappa_n t}-\frac12  W^2_2(\mu^n_s,\nu)e^{2\kappa_n s}\le 
\frac1{2\kappa_n}(e^{2\kappa_n t}-e^{2\kappa_n s})[
 S_n(\nu)- S_n(\mu_t^n)]
\end{equation}
($\forall 0\le s<t$).
In the case $\kappa=0$, the prefactor $\frac1{2\kappa}(e^{2\kappa t}-e^{2\kappa s})$ has to replaced by $(t-s)$.

{\bf (ii)} Note that our assumptions on $m$ and $V$ imply that
$S_n(\nu)\le S_1(\nu)+C''+C'\, W_2^2(\nu,\delta_{x'})\le C^*$
whenever $S_1(\nu)<\infty$
and that
$-S_n(\mu_t^n)\le C''+C'\, W_2^2(\mu_t^n,\delta_{x'})+C_0+C_1\, W_2^2(\mu_t^n,\delta_{x_0})$.
The differential version of \eqref{evi2} (or, in other words, \eqref{evi} with $S_n$ and $W_2$) allows to estimate uniformly in $n$
\begin{eqnarray*}
\frac{d}{dt} \frac12 W_2^2(\mu^n_t,\nu) &\le& -\kappa \, W_2^2(\mu^n_t,\nu)+ S_n(\nu)-S_n(\mu^n_t)\\
&\le&  C + c\cdot\frac 12 W_2^2(\mu^n_t,\nu)
\end{eqnarray*}
which leads to the uniform growth bound
\begin{eqnarray*}
 \frac12 W_2^2(\mu^n_t,\nu) &\le& e^{ct} \cdot\frac12 W_2^2(\mu_0,\nu)+(e^{ct}-1)\cdot C/c
\end{eqnarray*}
with constants $C$ and $c$
 which may depend on $\nu$ but not on $n$.
Thus
for each $t\in(0,1)$, the sequence  $(\mu^n_t)_{n\in\N}$ is $W_2$-bounded.
 Local compactness of  $W_2$-bounded subsets of $\mathcal{P}_2(X')$ implies that there exists some limit point $\mu_t$ such that
 $\mu_t^n\to \mu_t$ weakly  -- at least after passing to a subsequence. A diagonal argument allows to choose this subsequence in such a way that convergence holds for all rational $t\ge0$.

{\bf (iii)}
The quadratic lower bound (in terms of $W_2$) for $\mu\mapsto \mathrm{Ent}(\mu|m)$ and the lower semicontinuity $\mu\mapsto\int Vd\mu$ imply
that 
$\liminf_{n\to\infty} S_n(\mu_t^n) \ge S(\mu_t)$.
Moreover, $\lim_{n\to\infty}S_n(\nu)=S(\nu)$ provided $\mathrm{Ent}(\nu)<\infty$.
Thus for all such $\nu$ and for all rational $0\le s< t$
\begin{eqnarray}\label{s-evi}
\frac12  W^2_2(\mu_t,\nu)e^{2\kappa t}-\frac12  W^2_2(\mu_s,\nu)e^{2\kappa s}\le 
\frac1{2\kappa}(e^{2\kappa t}-e^{2\kappa s})[
 S(\nu)- S(\mu_t)].
\end{eqnarray} 
Applying this estimate with $\nu=\mu_s$ we obtain  local Lipschitz continuity in $t\in (0,\infty)\cap\Q$ of the flow $(\mu_t)_t$ which allows to extend it to all $t$, still satisfying the previous estimate.
Thus, in particular,
\begin{equation}\label{evi-disc}
\frac{d}{dt} \frac12 W_2^2(\mu_t,\nu) +\kappa \, W_2^2(\mu_t,\nu)\le S(\nu)-S(\mu_t)
\end{equation}
for a.e.\ $t\ge0$.

{\bf (iv)}
The next step is to extend \eqref{s-evi} (and in turn therefore also \eqref{evi-disc})  to all $\nu\in\mathcal{P}_2(X')$,
i.e. to get rid of the constraint $\mathrm{Ent}(\nu|m)<\infty$.
That is, given  $\nu\in\mathcal{P}_2(X')$ we have to find a sequence of $\nu_n\in\mathcal{P}_2(X')$
with $\mathrm{Ent}(\nu_n|m')<\infty$ such that $W_2(\nu_n,\nu)\to0$ and $\int Vd\nu_n\to\int Vd\nu$ as $n\to\infty$. 

One easily verifies that it suffices to prove this approximation result for functions $V$ which are uniformly bounded from below. Indeed, assume that for each $i\in\N$ there exists a sequence of measures 
$\nu_{i,n}\in\mathcal{P}_2(X')$
with $\mathrm{Ent}(\nu_{i,n}|m')<\infty$ such that $W_2(\nu_{i,n},\nu)\le\frac1n$ and $\int V_id\nu_{i,n}\le\int V_id\nu+\frac1n$ for all $n$ where $V_i:=\max\{V,-i\}$. Then the sequence $\nu_{n,n}$ will do the job for $V$:
\begin{eqnarray*}
\liminf_{n\to\infty}
\int Vd\nu_{n,n}
&\le&
\liminf_{n\to\infty}\left[\int V_n d\nu_{n,n}\right]
\le
\liminf_{n\to\infty}\left[\int V_n d\nu+\frac1n\right]=\int Vd\nu.
\end{eqnarray*}
Similarly, a simple monotone convergence argument  shows that it suffices to prove the approximation results for measures $\nu$ which are compactly supported in $X_0$ since the assertions \eqref{s-evi} and \eqref{evi-disc} are void if $\nu$ is not supported in $X_0$.

Thus, let us now assume that $\nu$ has compact support $K$ in $X_0$ and that $V\ge0$. 
Fix $\epsilon>0$ such that $K':=\overline B_\epsilon(K)$ is compact in $X_0$.
Let 
$\overline\nu_t$, $t>0$, be the gradient flow  in $\mathcal{P}_2(X')$ for the entropy $\mathrm{Ent}(.|m')$ starting in $\nu$   or, in other words, the heat distribution on $X'$ after time $t$ for the initial distribution $\nu$. 
Put 
$\nu_t(.)=\frac1{\alpha_t}\overline\nu_t(K'\cap . )$  where 
$\alpha_t=\overline\nu_t(K')$.
Then continuity of $t\mapsto \overline\nu_t$ in $\mathcal{P}_2(X')$ implies 
$\alpha_t\to1$ as $t\to0$.
Thus
\begin{eqnarray*}
\limsup_{t\to0} 
\int Vd\nu_t=\limsup_{t\to0}\frac1{\alpha_t} \int_{K'} Vd\overline\nu_t\le\int_{K'} Vd\nu=\int Vd\nu
\end{eqnarray*}
since $\overline\nu_t\to\nu$ weakly and since $V$ is bounded and continuous on $K'$.
Moreover, obviously
$\mathrm{Ent}(\nu_t|m')<\infty$ for all $t>0$ and $W_2(\nu_t,\nu)\to0$ as $t\to0$.

{\bf (v)}
Assume  that the accumulation points of $(\mu^n_t)_{n\in\N}$ are not unique, say $\tilde\mu_t$ is another accumulation point (again for simplicity along one fixed subsequence for all rational $t$).
Applying the previous estimate  \eqref{s-evi} twice (first with $\tilde\mu_s$  in the place of $\nu$, then with 
$\mu_{t}$ in the place of $\nu$ and $\tilde\mu_s,  \tilde\mu_t$ in the place of $\mu_s,\mu_t$)
yields
\begin{eqnarray*}
\frac12  W^2_2(\tilde\mu_t,\mu_t)e^{2\kappa t}+\frac12  W^2_2(\tilde\mu_s,\mu_t)[e^{2\kappa t}-e^{2\kappa s}]
-\frac12  W^2_2(\mu_s,\tilde\mu_s)e^{2\kappa s}
\le 
\frac1{2\kappa}(e^{2\kappa t}-e^{2\kappa s})[
 S(\tilde\mu_s)- S(\tilde\mu_t)]
\end{eqnarray*} 
and thus in the limit $t-s\to0$
\begin{equation*}
\frac{d}{dt} \frac12 W_2^2(\mu_t,\tilde\mu_t) +\kappa \, W_2^2(\mu_t,\tilde\mu_t)\le0 
\end{equation*}
for a.e.\ $t$. This finally implies $W_2(\mu_t,\mu_t')=0$ and hence proves the uniqueness of the accumulation points.

{\bf (vi)}
Now let us  consider two flows starting in different points, say $\mu_0,\mu'_0\in\mathcal{P}_2(X')$.
Again from 
\eqref{evi-disc}
we may deduce 
$\frac{d}{dt} \frac12 W_2^2(\mu_t,\mu'_t) +\kappa \, W_2^2(\mu_t,\mu'_t)\le0 
$
which immediately yields the $W_2$-expansion bound
\begin{equation}\label{W2-contr}W_2(\mu_t,\mu_t')\le e^{-\kappa t} W_2(\mu_0,\mu_0').\end{equation}
\end{proof}

\begin{lemma}
For each $\mu_0\in {\mathcal P}_2(X')$ the EVI$_\kappa$-flow for $S$ satisfies
$$\mathrm{Var}_2(\mu_t)\le e^{-2\kappa\, t} \, \mathrm{Var}_2(\mu_0)$$
for all $t>0$ where 
$\mathrm{Var}_2(\nu):=\int \int d^2(x,y)\, \nu(dx)\nu(dy)$.
\end{lemma}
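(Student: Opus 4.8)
The plan is to establish the differential inequality
$\frac{d}{dt}\mathrm{Var}_2(\mu_t)+2\kappa\,\mathrm{Var}_2(\mu_t)\le 0$ for a.e.\ $t>0$ and to conclude by Gr\"onwall's lemma. Writing
$\mathrm{Var}_2(\mu_t)=\int_{X'}W_2^2(\mu_t,\delta_z)\,\mu_t(dz)=\iint d^2(x,z)\,\mu_t(dx)\,\mu_t(dz)$,
the idea is to differentiate this expression along the flow and to control the time derivative by testing the EVI property of $(\mu_t)$ against Dirac masses. Note that $\mathrm{Var}_2$ is built out of \emph{squared} distances, exactly as $W_2^2$ in the contraction bound \eqref{W2-contr}; this is why one should expect the decay rate $2\kappa$ rather than $\kappa$.

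First I would fix $z\in X'$ and apply the EVI inequality \eqref{evi-disc} (equivalently \eqref{s-evi}) for the flow $(\mu_t)$ with the test measure $\nu=\delta_z$. This is legitimate precisely because, after the extension carried out in step (iv) of the previous lemma, \eqref{s-evi} holds for \emph{every} $\nu\in\mathcal{P}_2(X')$ and not only for $\nu$ of finite entropy; in particular it applies to the Dirac mass $\delta_z$, whose entropy is $+\infty$. Since $W_2^2(\mu_t,\delta_z)=\int d^2(x,z)\,\mu_t(dx)$ and $S(\delta_z)=V(z)$, this controls the evolution of $z\mapsto W_2^2(\mu_t,\delta_z)$. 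Integrating the resulting inequality against $\mu_t(dz)$, the linear term $V(z)$ integrates to $\int V\,d\mu_t=S(\mu_t)$ and cancels the term $-S(\mu_t)$ on the right-hand side of \eqref{evi-disc}, leaving only the quadratic term. Because the measure carrying the variable $x$ (through the flow) and the measure carrying the variable $z$ (against which we integrate) are the \emph{same} moving measure $\mu_t$, the EVI estimate applies symmetrically to both; adding the two contributions, exactly as in the contraction computation of steps (v)--(vi) of the previous lemma, yields the desired $\frac{d}{dt}\mathrm{Var}_2(\mu_t)+2\kappa\,\mathrm{Var}_2(\mu_t)\le 0$.

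The hard part will be the rigorous justification of this symmetric differentiation along the diagonal: the EVI inequality a priori holds only for a \emph{fixed} test measure $\nu$, whereas here $\delta_z$ is integrated against the moving measure $\mu_t$. To make this precise I would work with the integrated form \eqref{s-evi}, introduce the two-parameter quantity $h(t,s):=\iint d^2(x,z)\,\mu_t(dx)\,\mu_s(dz)$, integrate \eqref{s-evi} (with $\nu=\delta_z$) against $\mu_u(dz)$ for a frozen time $u$, and then pass to the diagonal $s,u\to t$, noting that on the diagonal the potential contributions cancel. The local Lipschitz continuity of $t\mapsto\mu_t$ in $W_2$ established in step (v) of the previous lemma, together with the uniform second-moment and growth bounds from step (ii), justifies the interchange of limit, differentiation and integration by dominated convergence, using that $z\mapsto W_2^2(\mu_t,\delta_z)$ is $\mu_t$-integrable for $\mu_t\in\mathcal{P}_2(X')$. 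Once the differential inequality is secured, Gr\"onwall's lemma gives $\mathrm{Var}_2(\mu_t)\le e^{-2\kappa t}\,\mathrm{Var}_2(\mu_0)$ for all $t>0$. As a cross-check and alternative, one can run the EVI$_\kappa$-flow for the tensorized potential $V\oplus V$ on the product space $X'\times X'$: the minimizing-movement scheme tensorizes and preserves products, so this flow equals $\mu_t\otimes\mu_t$ and $\mathrm{Var}_2(\mu_t)$ can be read off as $\int d^2(x,y)\,d(\mu_t\otimes\mu_t)$; the symmetric Dirac-testing above is, however, more direct and yields the sharp constant.
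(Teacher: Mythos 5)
Your proposal is correct and follows essentially the same route as the paper's own proof: both test the EVI \eqref{s-evi} against Dirac masses $\nu=\delta_z$ (legitimate precisely because of the extension to arbitrary $\nu\in\mathcal{P}_2(X')$ in step (iv) of the preceding lemma), integrate the resulting inequality against the flow measure at the two times $s$ and $t$, exploit the symmetry of $d^2$ so that the potential terms cancel, and pass to the diagonal $s\to t$ using $S(\mu_s)\to S(\mu_t)$. The only cosmetic difference is that the paper packages the conclusion as monotonicity of $t\mapsto\frac12\mathrm{Var}_2(\mu_t)e^{2\kappa t}$ rather than as a differential inequality plus Gr\"onwall, which is the same argument.
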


\begin{proof}
The  EVI$_\kappa$-property (\ref{s-evi}) for  $(\mu_t)_{t\ge0}$ applied to $\nu=\delta_y$ states
\begin{eqnarray}
\frac12 \int d^2 (x,y)\, d\mu_t(x)e^{2\kappa t}-\frac12  
\int d^2 (x,y)\, d\mu_s(x)e^{2\kappa s}
\le 
\frac1{2\kappa}(e^{2\kappa t}-e^{2\kappa s})[
 V(y)- S(\mu_t)].
\end{eqnarray}
Integrating this w.r.t. $\mu_t(dy)$ or  $\mu_s(dy)$, resp., yields
 \begin{eqnarray}
\frac12 \mathrm{Var}_2(\mu_t)e^{2\kappa t}-\frac12  \int
\int d^2 (x,y)\, d\mu_s(x)d\mu_t(x)e^{2\kappa s}
\le 0
\end{eqnarray}
and
 \begin{eqnarray}
\frac12  \int
\int d^2 (x,y)\, d\mu_s(x)d\mu_t(x)e^{2\kappa t}-
\frac12 \mathrm{Var}_2(\mu_s)e^{2\kappa s}
\le 
\frac1{2\kappa}(e^{2\kappa t}-e^{2\kappa s})[
 S(\mu_s)- S(\mu_t)].
\end{eqnarray}
Adding up these two inequalities and letting $s\to t$ we obtain
 \begin{eqnarray*}
\frac{d}{dt} \frac12 \mathrm{Var}_2(\mu_t)e^{2\kappa t}\le0
\end{eqnarray*}
since $S(\mu_s)\to S(\mu_t)$. This proves the claim.
\end{proof}

\section*{Appendix 1}
Let us finally add some observations which are not directly related to our main result but which might be of independent interest.

Denote the unique EVI$_\kappa$-flow for $V$ starting in $x_0\in X'$ by $x_t=\Phi_t(x_0), t>0$. Then for each $t>0$ the map $\Phi_t:X'\to X'$ is Lipschitz continuous (with Lipschitz constant 
$e^{-\kappa t}$). The family of maps $\Phi_t, t>0$ is a semigroup w.r.t. composition:
$\Phi_t\circ \Phi_s=\Phi_{t+s}$.

\begin{corollary}
For each $\mu_0\in\mathcal{P}_2(X')$  the (unique) EVI$_\kappa$-flow for $S$ starting in $\mu_0$
is obtained via push forward as
 \begin{equation}\label{push}
\mu_t=(\Phi_t)_*\mu_0.
\end{equation}
That is, $\int f(x)d\mu_t(x)=\int f(\Phi_t(x)d\mu_0(x)$ for each bounded Borel function $f$ on $X$.
\end{corollary}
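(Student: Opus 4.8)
The plan is to reduce the statement to the uniqueness of EVI$_\kappa$-flows for $S$ and then to verify, by a single transport argument, that the push-forward curve solves the EVI. Since the $W_2$-expansion bound \eqref{W2-contr} forces uniqueness of the EVI$_\kappa$-flow for $S$ emanating from any given measure, it suffices to show that $\nu_t:=(\Phi_t)_*\mu_0$ is \emph{an} EVI$_\kappa$-flow for $S$ starting in $\mu_0$, i.e. that it obeys \eqref{s-evi} against every $\sigma\in\mathcal{P}_2(X')$. If $S(\sigma)=+\infty$ the inequality \eqref{s-evi} is void, so I may assume $\sigma$ is concentrated on $X_0$.

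The key step uses the \emph{point-level} flow. By Corollary~1 each point flow $r\mapsto\Phi_r(x)$ satisfies \eqref{evi}, hence---by the same equivalence noted for \eqref{evi2}, together with the monotonicity of $r\mapsto V(\Phi_r(x))$---the integrated inequality
\begin{equation*}
\tfrac12 d^2(\Phi_t(x),z)\,e^{2\kappa t}-\tfrac12 d^2(x,z)\le \tfrac1{2\kappa}(e^{2\kappa t}-1)\bigl[V(z)-V(\Phi_t(x))\bigr]
\end{equation*}
for all $t>0$ and all $z\in X_0$ (the prefactor is $t$ when $\kappa=0$). Fixing an optimal coupling $\pi$ of $\mu_0$ and $\sigma$ and integrating this against $\pi(dx,dz)$, I would use $\int V(z)\,d\pi=S(\sigma)$ and $\int V(\Phi_t(x))\,d\pi=\int V\,d\nu_t=S(\nu_t)$ to arrive at
\begin{equation*}
\tfrac12 A(t)\,e^{2\kappa t}-\tfrac12 A(0)\le \tfrac1{2\kappa}(e^{2\kappa t}-1)\bigl[S(\sigma)-S(\nu_t)\bigr],\qquad A(r):=\int d^2(\Phi_r(x),z)\,d\pi(x,z).
\end{equation*}
Here $A(0)=W_2^2(\mu_0,\sigma)$ by optimality of $\pi$, while $(\Phi_t\otimes\mathrm{id})_*\pi$ is a (generally non-optimal) coupling of $\nu_t$ and $\sigma$, so $A(t)\ge W_2^2(\nu_t,\sigma)$. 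Substituting both facts into the left-hand side only strengthens the inequality and produces exactly \eqref{s-evi} with $s=0$.

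To promote this to the full range $0\le s<t$, I would invoke the semigroup identity $\Phi_{s+r}=\Phi_r\circ\Phi_s$ from Appendix~1, which gives $\nu_{s+r}=(\Phi_r)_*\nu_s$; applying the $s=0$ inequality to the flow restarted at $\nu_s$ and multiplying through by $e^{2\kappa s}$ recovers \eqref{s-evi} for arbitrary $0\le s<t$. Thus $\nu_t$ is the EVI$_\kappa$-flow for $S$ from $\mu_0$, and uniqueness yields \eqref{push}; the identity $\int f\,d\mu_t=\int f\circ\Phi_t\,d\mu_0$ is then just the definition of push-forward.

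The main obstacle is conceptual rather than computational: one must transport a \emph{single} coupling---optimal at the initial time---forward by $\Phi_t$, rather than chasing the time-dependent optimal couplings of $\nu_t$ and $\sigma$. This works precisely because $W_2^2$ is an infimum over couplings, so the forward-transported coupling over-estimates $W_2^2(\nu_t,\sigma)$, which is the sign needed to preserve \eqref{s-evi} after substitution. The remaining issues---$\pi$-integrability of $d^2(\Phi_t(\cdot),\cdot)$ (guaranteed by the $e^{-\kappa t}$-Lipschitz bound on $\Phi_t$ and $\mu_0,\sigma\in\mathcal{P}_2(X')$), the harmless reduction to $\sigma$ supported in $X_0$, and the $\kappa\le 0$ form of the prefactor---are routine.
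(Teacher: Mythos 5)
Your proof is correct, and it takes a genuinely different route from the paper's. The paper obtains this corollary as a consequence of the $\sigma$-additivity lemma stated immediately after it: if $\mu_0=\sum_i\lambda_i\mu_0^i$ then the flow from $\mu_0$ is $\sum_i\lambda_i\mu_t^i$. That lemma is proved by decomposing a time-$t$ optimal coupling of $\mu_t$ and $\nu$ into sub-couplings, applying the \emph{differential} EVI to each component flow with the induced observation measures $\nu_t^i$, and summing; the corollary then follows by combining additivity with the Dirac case (the flow from $\delta_{x_0}$ is $\delta_{\Phi_t(x_0)}$) and passing from discrete to general $\mu_0$ via $W_2$-density and the contraction \eqref{W2-contr}. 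You instead verify directly that $\nu_t=(\Phi_t)_*\mu_0$ satisfies the \emph{integrated} EVI \eqref{s-evi}: you integrate the pointwise integrated EVI against a single time-$0$ optimal coupling $\pi$ of $\mu_0$ and $\sigma$, use that $(\Phi_t\otimes\mathrm{id})_*\pi$ is a (generally non-optimal) coupling of $\nu_t$ and $\sigma$ so its cost dominates $W_2^2(\nu_t,\sigma)$, extend from $s=0$ to general $s$ via the semigroup identity, and invoke uniqueness. In effect yours is a continuum (disintegration-into-Diracs) version of the paper's countable decomposition, run at the initial time in integrated form rather than at time $t$ in differential form. What your route buys: it bypasses both the additivity lemma and the discrete-approximation step, and needs only one fixed coupling instead of couplings re-optimized at each time. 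What the paper's route buys: the additivity lemma is of independent interest, and its differential argument needs neither the monotonicity of $V$ along point flows nor the semigroup identity, both of which your integrated form uses (though both are available in the paper). One point you should make explicit before invoking uniqueness: the curve $t\mapsto(\Phi_t)_*\mu_0$ indeed emanates from $\mu_0$ in $W_2$; this follows from $\Phi_t(x)\to x$ pointwise together with dominated convergence, using the Lipschitz bound on $\Phi_t$ to dominate $d^2(x,\Phi_t(x))$ by an integrable function for bounded $t$.
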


This is a straightforward consequence of the following lemma which states  that
the EVI$_\kappa$-flow for $S$ is $\sigma$-additive in $\mu$.

\begin{lemma}
Assume that $\mu_0=\sum_{i=1}^n \lambda_i \mu_0^i$ for some probability measures $\mu_0^i$ and positive numbers $\lambda_i$ with $\sum_i \lambda_i=1$ where $n\in\N$ or $n=\infty$. For each $i$, let $(\mu_t^i)_{t>0}$ be the EVI$_\kappa$-flow for $S$ starting in $\mu_0^i$. Then 
\begin{equation}\label{sum-of-evis}
\mu_t=  \sum_{i=1}^n \lambda_i \mu_t^i
\end{equation}
is the EVI$_\kappa$-flow starting in $\mu_0$.
\end{lemma}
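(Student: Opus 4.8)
The plan is to invoke the uniqueness of EVI$_\kappa$-flows for $S$ established above: it suffices to verify that $\mu_t:=\sum_{i=1}^n\lambda_i\mu_t^i$ is \emph{an} EVI$_\kappa$-flow for $S$ starting in $\mu_0$. The decisive structural fact is that $S(\mu)=\int V\,d\mu$ is \emph{linear} in $\mu$, so $S(\mu_t)=\sum_i\lambda_i S(\mu_t^i)$; this linearity is what makes the statement plausible. The genuine difficulty is that $W_2^2$ is \emph{not} additive under convex combinations, so one cannot simply form the weighted sum of the individual inequalities \eqref{evi-disc}. I would overcome this by a ``freeze the optimal coupling'' argument combined with a disintegration of that coupling along the given decomposition of $\mu_t$.

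I would first treat the case $n<\infty$. Fix $t_0>0$ and an arbitrary $\nu\in\mathcal{P}_2(X')$, and choose an optimal coupling $\pi$ of $\mu_{t_0}$ and $\nu$. Since $\lambda_i\mu_{t_0}^i\le\mu_{t_0}$, the densities $f_i:=d\mu_{t_0}^i/d\mu_{t_0}$ are well defined with $\sum_i\lambda_i f_i=1$ $\mu_{t_0}$-a.e., so that $d\pi^i(x,y):=f_i(x)\,d\pi(x,y)$ defines couplings $\pi^i$ of $\mu_{t_0}^i$ with their respective second marginals $\nu^i$, obeying $\pi=\sum_i\lambda_i\pi^i$ and hence $\nu=\sum_i\lambda_i\nu^i$. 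A short sandwich argument then shows each $\pi^i$ is optimal and $W_2^2(\mu_{t_0},\nu)=\sum_i\lambda_i W_2^2(\mu_{t_0}^i,\nu^i)$: gluing optimal plans gives $W_2^2(\mu_{t_0},\nu)\le\sum_i\lambda_i W_2^2(\mu_{t_0}^i,\nu^i)$, while $\sum_i\lambda_i W_2^2(\mu_{t_0}^i,\nu^i)\le\sum_i\lambda_i\int d^2\,d\pi^i=\int d^2\,d\pi=W_2^2(\mu_{t_0},\nu)$, forcing equality termwise.

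Next I would freeze the targets $\nu^i$ and set $c(t):=\sum_i\lambda_i\,\tfrac12 W_2^2(\mu_t^i,\nu^i)$. Joint convexity of $W_2^2$ yields $\tfrac12 W_2^2(\mu_t,\nu)\le c(t)$ for every $t$, with equality at $t_0$ by the previous step. Applying the individual EVI$_\kappa$-inequalities \eqref{s-evi} for the flows $\mu_t^i$ against the fixed $\nu^i$ --- in the upper right derivative form, which holds for \emph{all} $t$ because $t\mapsto S(\mu_t^i)$ is continuous --- I obtain $\frac{d^+}{dt}\big|_{t_0}c(t)\le\sum_i\lambda_i\big[-\kappa W_2^2(\mu_{t_0}^i,\nu^i)+S(\nu^i)-S(\mu_{t_0}^i)\big]$. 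The three sums collapse: $\sum_i\lambda_i W_2^2(\mu_{t_0}^i,\nu^i)=W_2^2(\mu_{t_0},\nu)$ by optimality, and $\sum_i\lambda_i S(\nu^i)=S(\nu)$, $\sum_i\lambda_i S(\mu_{t_0}^i)=S(\mu_{t_0})$ by linearity of $S$ together with $\sum_i\lambda_i\nu^i=\nu$. Since $\tfrac12 W_2^2(\mu_t,\nu)\le c(t)$ with equality at $t_0$ forces $\frac{d^+}{dt}\big|_{t_0}\tfrac12 W_2^2(\mu_t,\nu)\le\frac{d^+}{dt}\big|_{t_0}c(t)$, this is precisely \eqref{evi-disc} for $\mu_t$ at the arbitrary time $t_0$ and test measure $\nu$. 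As $\mu_t$ is a finite convex combination of locally Lipschitz curves it is itself locally Lipschitz, so the pointwise derivative bound integrates up to the full EVI$_\kappa$-property, and uniqueness identifies $\mu_t$ with the flow starting in $\mu_0$.

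The case $n=\infty$ I would then obtain by truncation: apply the finite case to the normalized partial sums $\mu_0^{(N)}=\Lambda_N^{-1}\sum_{i\le N}\lambda_i\mu_0^i$ with $\Lambda_N=\sum_{i\le N}\lambda_i$, and pass to the limit using the $W_2$-contraction \eqref{W2-contr}, which makes the flows depend $W_2$-continuously on their starting points, together with the $W_2$-convergence $\sum_{i\le N}\lambda_i\mu_t^i\to\sum_{i=1}^\infty\lambda_i\mu_t^i$. I expect the main obstacle to be exactly the measure-theoretic decomposition $\pi=\sum_i\lambda_i\pi^i$ of the optimal plan and the verification that the frozen competitor $c(t)$ touches $\tfrac12 W_2^2(\mu_t,\nu)$ from above with contact at $t_0$; once these are in place, the linearity of $S$ carries the argument through.
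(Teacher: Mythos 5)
Your proposal is correct and follows essentially the same route as the paper's proof: fix the time $t_0$, disintegrate the optimal coupling of $\mu_{t_0}$ and $\nu$ via the densities $f_i=d\mu_{t_0}^i/d\mu_{t_0}$ into optimal sub-couplings (yielding equality $W_2^2(\mu_{t_0},\nu)=\sum_i\lambda_i W_2^2(\mu_{t_0}^i,\nu^i)$ at $t_0$ and the convexity upper bound for later times), freeze the targets $\nu^i$, apply the individual EVI inequalities, and collapse the sums using the linearity of $S$. Your only real additions are presentational --- phrasing the conclusion as a Dini-derivative comparison with the touching majorant $c(t)$, and handling $n=\infty$ by truncation plus the contraction \eqref{W2-contr}, which cleanly justifies the interchange of limit and infinite sum that the paper performs directly.
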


\begin{proof} 
Let us define $\mu_t$ as in (\ref{sum-of-evis}) and for fixed $t$ let $q_t$ be an optimal coupling of $\mu_t$ and $\nu$. Observe that 
$\mu^i_t=f_t^i \mu_t$ for suitable bounded nonnegative functions $f_t^i$ on $X$.
Put
$dq_t^i(x,y)=f_t^i(x)dq_t(x,y)$ and $\nu_t^i(.)=q_t^i(X,.)$.
Then
$\nu$ and $q_t$ can be represented as
\begin{equation*}\label{sum-of-ots}
\nu=  \sum_{i=1}^n \lambda_i \nu^i,\quad
q_t=  \sum_{i=1}^n \lambda_i q_t^i
\end{equation*}
where for each $i$ the probability measure $q^i_t$ is an optimal coupling of $\mu_t^i$ and $\nu^i$
(since sub-couplings of optimal couplings are optimal).
Thus
\begin{equation}\label{sum-of-ws1}
W_2^2(\mu_t,\nu)=\sum_i \lambda_i W_2^2(\mu_t^i,\nu_t^i).
\end{equation}
On the other hand, for each $s>t$
\begin{equation}\label{sum-of-ws2}
W_2^2(\mu_s,\nu)\le\sum_i \lambda_i W_2^2(\mu_s^i,\nu_t^i).
\end{equation}
(One should not expect equality since in general the optimal decomposition of $\nu$ induced by the decomposition $\mu_t=  \sum_{i=1}^n \lambda_i \mu_t^i$ will not be optimal for the coupling with
$\mu_s=  \sum_{i=1}^n \lambda_i \mu_s^i$. )
The EVI$_\kappa$-property for $(\mu_s^i)_{s>t}$ (with observation point $\nu_t^i$) implies
\begin{equation}\label{sum-of-ws3}
\lim_{s\searrow t}\frac1{2(s-t)}\Big[W_2^2(\mu_s^i,\nu_t^i)-W_2^2(\mu_t^i,\nu_t^i)\Big]
+\kappa W_2^2(\mu_t^i,\nu_t^i)  \le S(\nu_t^i)- S(\mu_t^i).
\end{equation}
Adding up the last three inequalities yields
\begin{eqnarray*}\label{sum-of-ws4}
\lefteqn{\limsup_{s\searrow t}\frac1{2(s-t)}\Big[W_2^2(\mu_s,\nu)-W_2^2(\mu_t,\nu)\Big]
+\kappa W_2^2(\mu_t,\nu) }\\
&\le &\sum_i \lambda_i \Big[
\lim_{s\searrow t}\frac1{2(s-t)}\Big(W_2^2(\mu_s^i,\nu_t^i)-W_2^2(\mu_t^i,\nu_t^i)\Big)
+\kappa W_2^2(\mu_t^i,\nu_t^i) \Big]
\\
&\le&
\sum_i \lambda_i 
\Big[ S(\nu_t^i)- S(\mu_t^i)\Big]\\
&=&
\Big[ S(\nu_t)- S(\mu_t)\Big].
\end{eqnarray*}
For the last equality, we made use of the additivity of $S$.
\end{proof}

\section*{Appendix 2}

Finally, let  us consider EVI-gradient flows in a more general setting.
Let $(X,d)$ be an arbitrary complete geodesic space and let $V:X\to(-\infty,+\infty]$ be a continuous function, finite on a dense subset $X_0$ of $X$, and let
a family of maps $\Phi_t: X\to X$ be given such that for each $x\in X$ the curve
$(\Phi_t(x))_{t\ge0}$ is the unique EVI$_\kappa$-gradient flow for $V$ (for some fixed $\kappa\in\R$).
This gradient flow comes with  the `natural' parametrization 
\begin{equation*}
\frac{d}{dt}V(\Phi_t)=-\left|\nabla^- V(\Phi_t)\right|^2=-|\dot \Phi_t|^2
\end{equation*}
We will now have a closer look at a reparametization $(\Psi_a(x))_{a\in\R}$ of this flow with
$\frac{\partial}{\partial a}V(\Psi_a)=1$.
For $a\in\R$ and $x\in X$ we put
\begin{equation*}
T_a(x)=\inf\left\{t\ge0: V(\Phi_t(x))\le a\right\}
\end{equation*}
(with $\inf\emptyset:=\infty$)
and if $T_a(x)<\infty$
\begin{equation*}
\Psi_a(x)=\Phi_{T_a(x)}(x).
\end{equation*}
Continuity of $x\mapsto V(x)$ and $t\mapsto \Phi_t(x)$ for $t>0$ imply that $V(\Psi_a(x))=a$ provided
$0<T_a(x)<\infty$.

\begin{proposition*} For all $x,x'\in X$ and all $a\in\R$ with $T_a(x), T_a(x')<\infty$
\begin{equation}
d\Big(\Psi_a(x),\Psi_a(x')\Big)\le \exp\left(-\kappa\frac{ T_a(x)+T_a(x')}2\right)\cdot d\big(x,x'\big).
\end{equation}
\end{proposition*}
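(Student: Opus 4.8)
The plan is to exploit the one feature that distinguishes the reparametrized flow from the original one: by construction both points $\Psi_a(x)$ and $\Psi_a(x')$ lie on the \emph{same} level set $\{V=a\}$ (whenever $0<T_a(x),T_a(x')<\infty$, which holds in the regime $a<\min\{V(x),V(x')\}$; the transitional range $a\in[\min,\max]$ of the two initial values is treated separately below). First I would record that $a\mapsto T_a(x)$ is non-increasing, so that introducing the forward parameter $u=a_0-a$ with $a_0=\max\{V(x),V(x')\}$ turns $P(u):=T_{a_0-u}(x)$ and $Q(u):=T_{a_0-u}(x')$ into non-decreasing functions and the curves $u\mapsto\Psi_{a_0-u}(x)$, $u\mapsto\Psi_{a_0-u}(x')$ into forward reparametrizations of the EVI-flows $\Phi$, vanishing at $u=0$ where the two points are still $x$ and $x'$.

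The core of the argument is the following cancellation. Testing \eqref{evi} for the flow $t\mapsto\Phi_t(x)$ against the frozen reference point $z=\Psi_a(x')$ at $t=T_a(x)$, and using $V(\Psi_a(x'))=V(\Psi_a(x))=a$, the entire right-hand side $V(z)-V(\Phi_t(x))$ vanishes and only the curvature term survives; in the normalization of EVI$_\kappa$ for which two equal-time flows obey the expansion bound \eqref{contract} this reads
\[
\left[\frac{d}{dt}\tfrac12 d^2(\Phi_t(x),\Psi_a(x'))\right]_{t=T_a(x)}\le -\tfrac{\kappa}{2}\, d^2(\Psi_a(x),\Psi_a(x')),
\]
and symmetrically with $x,x'$ interchanged. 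Writing $D(u)=d\big(\Psi_{a_0-u}(x),\Psi_{a_0-u}(x')\big)$ and applying the two-curve chain rule for the squared distance along the pair of reparametrized flows — the same device that turns \eqref{evi} into \eqref{contract} — weighted by the nonnegative derivatives $\dot P(u),\dot Q(u)$, I obtain for a.e.\ $u$
\[
\frac{d}{du}\tfrac12 D^2(u)\le -\tfrac{\kappa}{2}\big(\dot P(u)+\dot Q(u)\big)\,D^2(u).
\]
Gronwall's lemma, integrated from $u=0$ (where $P=Q=0$ and $D=d(x,x')$), yields $D(u)\le\exp\!\big(-\tfrac{\kappa}{2}(P(u)+Q(u))\big)\,d(x,x')$, which is exactly the asserted bound with exponent $-\kappa\,(T_a(x)+T_a(x'))/2$: each of the two flows contributes one half, and at equal hitting times one recovers \eqref{contract}. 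The transitional range $a\in[\min\{V(x),V(x')\},a_0]$ is harmless: there exactly one of the two points has started moving while the other, say $x'$, is still at rest with $V(x')\le a$, so the cancellation only improves to the favourable inequality $V(x')-a\le 0$ and the differential estimate above persists with $\dot Q\equiv 0$.

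The step I expect to be delicate is the rigorous justification of the two-curve chain rule together with the regularity of the reparametrization. The EVI-flow $\Phi_\cdot(x)$ is only locally Lipschitz for $t>0$, and the hitting-time function $a\mapsto T_a(x)$ may fail to be Lipschitz — indeed it may jump — precisely where the metric slope $|\nabla^- V|$ degenerates, since by the energy-dissipation identity \eqref{dissi} the flow slows down near critical points of $V$. The clean way around this is to treat $T_a$ as a monotone, hence a.e.\ differentiable and of bounded variation, function of $a$, to run the differential inequality on its absolutely continuous part, and to check that the jump/singular part can only improve the bound (the two points cannot move apart across a level interval on which $V$ is constant, as there the flow is momentarily stationary). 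Establishing the sum rule in the metric setting for these time-changed curves, and verifying that the singular part of $dT_a$ enters with the correct sign, is the technical heart of the proof; everything else is a direct transcription of the derivation of the expansion bound \eqref{contract}.
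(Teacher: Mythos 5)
Your level-synchronization idea is genuinely different from the paper's argument, and the cancellation $V(\Psi_a(x))=V(\Psi_a(x'))=a$ is the right structural observation; but the paper obtains the result by a much lighter, \emph{sequential} argument that never touches the regularity questions you raise. Assume $T:=T_a(x)\le T_a(x')=:T+S$. Up to time $T$ both flows run simultaneously and the contraction \eqref{contract} gives the factor $e^{-\kappa T}$. On $[T,T+S]$ the arrived point $z=\Phi_T(x)=\Psi_a(x)$, which sits at level $\le a$, is frozen as the observation point in \eqref{evi} for the still-moving flow $\Phi_{T+t}(x')$; since that flow has not yet reached level $a$, the right-hand side $V(z)-V(\Phi_{T+t}(x'))\le 0$ \emph{pointwise in} $t$, and integrating the resulting linear differential inequality gives the half-rate factor $e^{-\kappa S/2}$. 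Multiplying, $e^{-\kappa(T+S/2)}=e^{-\kappa(T_a(x)+T_a(x'))/2}$, which is the claim. So the asymmetric "run together, then let the laggard catch up against a frozen target" decomposition replaces your two-curve chain rule; EVI is only ever integrated in the original time variable along a single flow, and no property of $a\mapsto T_a$ beyond its definition is used.

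In your version there is a genuine gap, and your proposed fix points in the wrong direction. First, jumps of $a\mapsto T_a(x)$ cannot occur at all in the regime $T_a(x)<\infty$ of the Proposition: a jump at a level $c>a$ means $t\mapsto V(\Phi_t(x))$ has a plateau at value $c$; by the dissipation identity \eqref{dissi} the flow is then constant on that time interval, and by the semigroup property plus uniqueness it is then constant for \emph{all} later times, contradicting $T_a(x)<\infty$. What can survive is a singular \emph{continuous} part of $T_\cdot(x)$ (where the dissipation rate degenerates), and there your argument breaks: for $\kappa>0$ any singular increase of $P+Q$ makes the claimed bound $D\le e^{-\kappa(P+Q)/2}\,d(x,x')$ \emph{stronger}, while your a.e.\ differential inequality plus Gronwall only yields the exponent built from the absolutely continuous parts $P_{ac}+Q_{ac}\le P+Q$. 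So the singular part cannot "only improve the bound"; it is exactly the part on which your Gronwall argument proves nothing, and during it the flow is \emph{not} stationary (stationarity is the jump case, excluded above), so no free decay is available. (For $\kappa\le 0$ your reduction is harmless.) The gap can be closed, but only by proving that the singular part is empty: using $\frac{d}{dt}V(\Phi_t)=-|\nabla^-V|^2(\Phi_t)$ a.e.\ from \eqref{dissi}, and the fact that an EVI flow with vanishing slope at some time stays put forever (by uniqueness), one shows the set $\{t<T_a(x):\frac{d}{dt}V(\Phi_t(x))=0\}$ is Lebesgue-null, whence $a\mapsto T_a(x)$ is absolutely continuous on $[a,V(x)]$. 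That missing lemma is what your write-up defers as "the technical heart"; with it your route works, but it is considerably heavier than the paper's two-step argument.
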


\begin{proof}
Given $x,x'\in X$ and $a\in\R$ put
$T=T_a(x)\wedge T_a(x')$ and $S=|T_a(x)-T_a(x')|$. Assume without restriction that $T_a(x)\le T_a(x')$.
Then $T_a(x)=T$, $T_a(x')=T+S$ and $\Psi_a(x)=\Phi_T(x)$, $\Psi_a(x')=\Phi_{T+S}(x')$.
Applying the basic contraction property \eqref{contract} with $t=T$ yields
\begin{equation}\label{first}
d(\Phi_T(x),\Phi_T(x'))\le e^{-\kappa\, T} d(x,x').
\end{equation}
Next we apply \eqref{evi} to the flow $(\Phi_{T+t}(x'))_{t>0}$  with observation point $z=\Phi_T(x)$.
It states that
\begin{equation*}
\frac{d}{dt} \frac12 d^2(\Phi_T(x),\Phi_{T+t}(x'))
 +\kappa \, d^2(\Phi_T(x),\Phi_{T+t}(x'))\le V(\Phi_T(x))-V(\Phi_{T+t}(x'))\le0
\end{equation*}
 for $t\in [0,S]$
which immediately yields
\begin{equation*}
d(\Phi_T(x),\Phi_{T+S}(x'))\le e^{-\kappa\, S/2} d(\Phi_T(x),\Phi_T(x')).
\end{equation*}
Together with \eqref{first} we thus obtain
\begin{equation*}\label{second}
d(\Phi_T(x),\Phi_{T+S}(x'))\le e^{-\kappa\, (T+S/2)} d(x,x')
=\exp\left(-\frac\kappa2( T_a(x)+T_a(x'))\right)\cdot d(x,x').
\end{equation*}
This is the claim.
\end{proof}

\medskip

{\it The author gratefully acknowledges valuable comments of Matthias Erbar, Nicola Gigli, Martin Kell, and Christian Ketterer on early drafts of this paper. }

\end{document}